\title[Instability for the rotation set of torus diffeomorphisms]{Instability for the rotation set of diffeomorphisms of the torus homotopic to the identity}
\author{Pierre-Antoine Guihéneuf}
\address{Universidade Federal Fluminense, Niterói}
\email{pierre-antoine.guiheneuf@math.u-psud.fr}
\newtheorem{lemme}{Lemma}
\newtheorem{theoreme}[lemme]{Theorem}
\theoremstyle{definition}
\theoremstyle{remark}
\newtheorem{rem}[lemme]{Remark}
\newcommand{\N}{\mathbf{N}}
\newcommand{\R}{\mathbf{R}}
\newcommand{\T}{\mathbf{T}}
\newcommand{\Q}{\mathbf{Q}}
\newcommand{\Z}{\mathbf{Z}}
\begin{document}

\maketitle

%

\begin{abstract}
The aim of this short note is to explain how the arguments of the ``closing lemma with time control'' of F. Abdenur and S. Crovisier \cite{MR2975581} can be used to answer Question 1 of the article ``Instability for the rotation set of homeomorphisms of the torus homotopic 
to the identity'' of S. Addas-Zanata \cite{MR2054045}.
\end{abstract}

\bigskip

In this short note, we explain how to get a $C^1$ version of a perturbation result of the rotation set of homeomorphisms of the torus homotopic to the identity, obtained by S. Addas-Zanata in \cite{MR2054045}: consider some diffeomorphism $f$ of the torus, isotopic to the identity, and suppose that some extreme point $(t,\omega)$ of the rotation set of $f$ has at least one irrational coordinate. Then there exists a perturbation $g$ of $f$, which is arbitrarily $C^1$-close to $f$, such that the rotation set of $g$ contains some vector that was not in the rotation set of $f$.

We will use the notations of \cite{MR2054045}. Let us recall the most useful ones: we will denote $\T^2 = \R^2/\Z^2$ the flat torus. The space $D^1(\T^2)$ will be the set of $C^1$-diffeomorphism of the torus $\T^2$ homotopic to the identity, endowed with the classical $C^1$ topology on compact spaces; $D^1(\R^2)$ will be the set of lifts to the plane of elements of $D^1(\T^2)$. Given $\tilde f\in D^1(\R^2)$, its \emph{rotation set} will be defied as 
\[\rho(\tilde f) = \bigcap_{i=1}^\infty \overline{\bigcup_{n\ge i}\Big\{ \frac{\tilde f^n(\tilde x)-\tilde x}{n}\mid \tilde x\in\R^2\Big\}}.\]
For $\tilde x\in\R^2$, we will denote
\[\rho(\tilde x,n,\tilde f) = \frac{\tilde f^n(\tilde x)-\tilde x}{n}\]
the rotation vector of the segment of orbit $\tilde x,\tilde f(\tilde x),\cdots,\tilde f^n(\tilde x)$, and when it is well defined (for example for a periodic point),
\[\rho(\tilde x,\tilde f) = \lim_{n\to +\infty} \rho(\tilde x,n,\tilde f).\]
We will also consider $\omega$ a volume or a symplectic form on $\T^2$, whose lift to $\R^2$ will also be denoted by $\omega$.

We will prove the following result.

\begin{theoreme}\label{Th1}
Let $\tilde f \in D^1(\R^2)$ be such that $\rho(\tilde f)$ has an extremal point $(t,\omega)\notin\Q^2$. Then there exists $\tilde g \in D^1(\R^2)$, arbitrarily $C^1$-close to $f$, such that $\rho(\tilde g)\cap \rho(\tilde f)^\complement \neq \emptyset$ (and in particular, $\rho(\tilde g) \neq \rho(\tilde f)$).

Moreover, if $\tilde f$ preserves $\omega$, then $\tilde g$ can be supposed to preserve it too. 
\end{theoreme}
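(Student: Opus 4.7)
The strategy is to build a $C^1$-small perturbation $\tilde g$ of $\tilde f$ possessing a periodic orbit whose (necessarily rational) rotation vector lies just outside $\rho(\tilde f)$. The extremality of $(t,\omega)$ provides a supporting line $L$ through $(t,\omega)$ such that $L\cap\rho(\tilde f)=\{(t,\omega)\}$; let $H^+$ denote the open half-plane bounded by $L$ and disjoint from $\rho(\tilde f)$. Since $(t,\omega)\notin\Q^2$, any rational vector in $L\cup H^+$ lies automatically in $\rho(\tilde f)^\complement$. Writing the equation of $L$ as $\nu\cdot v=c$ with $\nu\cdot(t,\omega)=c$ and $\rho(\tilde f)\subset\{\nu\cdot v\le c\}$, it suffices to produce a periodic orbit of $\tilde g$ with integer translation vector $m$ and period $n$ satisfying $\nu\cdot m\ge nc$ and $m/n$ close to $(t,\omega)$.

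By the structure theorem of Misiurewicz--Ziemian for rotation sets, the extreme point $(t,\omega)$ is realized as the rotation vector of some ergodic $f$-invariant probability measure $\mu$. Fix a $\mu$-generic recurrent $x\in\T^2$ with lift $\tilde x\in\R^2$; by Birkhoff, $\rho(\tilde x,n,\tilde f)\to(t,\omega)$. The quantity $\xi_n := \nu\cdot(\tilde f^n(\tilde x)-\tilde x)-nc$ is the Birkhoff sum at time $n$ of the zero-mean observable $\psi(y)=\nu\cdot(\tilde f(\tilde y)-\tilde y)-c$ on $\T^2$. Unless $\rho(\tilde f)\subset L$ (a degenerate case handled by varying $L$ within the cone of supporting lines at $(t,\omega)$), there is an ergodic invariant measure $\mu'$ with $\int\psi\,d\mu'\neq 0$, so $\psi$ is not an $L^1$-coboundary. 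Atkinson's lemma then yields $\limsup_n\xi_n=+\infty$ for $\mu$-a.e.\ $x$, so one may extract a sequence $n_k\to\infty$ of return times of $x$ with $\xi_{n_k}\to+\infty$.

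To each such pair $(\tilde x, n_k)$ we apply the $C^1$-closing lemma with time control of Abdenur--Crovisier \cite{MR2975581}: there exist $\tilde g_k\in D^1(\R^2)$, arbitrarily $C^1$-close to $\tilde f$ and supported in a small neighborhood of the orbit segment $\{x,f(x),\ldots,f^{n_k}(x)\}$, such that $\tilde g_k^{n_k}(\tilde x)=\tilde x+m_k$ where $m_k\in\Z^2$ is the integer vector closest to $\tilde f^{n_k}(\tilde x)-\tilde x$. Hence $\nu\cdot m_k - n_k c = \xi_{n_k}+O(1)$, which is strictly positive for $k$ large; this places the rational rotation vector $m_k/n_k$ of the new $\tilde g_k$-periodic orbit in $H^+$, and thus in $\rho(\tilde g_k)\cap\rho(\tilde f)^\complement$. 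For the \emph{moreover} clause, one substitutes a conservative (symplectic or volume-preserving) $C^1$-closing lemma with time control in place of Abdenur--Crovisier's; the rest of the argument carries over verbatim. The main obstacle is ensuring that the closing perturbation can be performed precisely along return times where the cocycle $\xi_n$ is large and positive: combining Atkinson's alternative with the time-control property of the closing lemma is the technical heart of the proof.
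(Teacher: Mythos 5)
The central gap is in your application of the Abdenur--Crovisier closing lemma. You assert that it produces $\tilde g_k$ with $\tilde g_k^{n_k}(\tilde x)=\tilde x+m_k$, i.e.\ a periodic orbit through $x$ of period \emph{exactly} $n_k$ whose translation vector is the integer nearest $\tilde f^{n_k}(\tilde x)-\tilde x$. That is false: a $C^1$ closing lemma (Pugh, Hayashi, Abdenur--Crovisier) cannot close a long return segment at the same period. The perturbation is obtained by performing many \emph{shortcuts} along the orbit, and the resulting periodic orbit is typically far shorter with no a priori relation between its rotation vector and $\rho(\tilde x,n_k,\tilde f)$. The ``time control'' in \cite{MR2975581} concerns the residue of the period modulo a fixed integer $\ell$, not the displacement. (In the $C^0$ case one can indeed close at the same return time, which is why Addas-Zanata's argument goes through directly; the $C^1$ setting is precisely where this breaks.) The whole technical point of the paper is to overcome this: Lemma~\ref{Lem1} reruns Abdenur--Crovisier's construction, replacing the dichotomy ``$\ell$ divides / does not divide the length'' by ``$L(\rho)>0$ / $L(\rho)\le 0$'', using that at every shortcut the original segment's rotation vector is a barycentre of the two candidate pseudo-orbits, so at least one of them keeps $L(\rho)>0$. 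Without such a modified lemma your $\tilde g_k$ gives no control on $\rho(\tilde x,\tilde g_k)$ and the argument collapses.

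A secondary issue is your use of Atkinson. Atkinson's theorem gives recurrence of Birkhoff sums ($\liminf_n|\xi_n|=0$ $\mu$-a.e.), not $\limsup_n\xi_n=+\infty$, and your justification---an ergodic $\mu'$ with $\int\psi\,d\mu'\neq 0$ so $\psi$ is not a coboundary---does not control the $\mu$-a.e.\ behaviour of $S_n\psi$: $\psi$ may fail to be a continuous coboundary yet still be a $\mu$-measurable one. The paper instead splits into two cases over return times: if some return time has $L(\rho(\tilde x_0,n,\tilde f))>0$ one closes there; otherwise (after Addas-Zanata's Lemma~3 to pass to a \emph{strict} inequality $\xi_{n_0}<0$) Atkinson provides $n_1\gg n_0$ with $\xi_{n_1}$ close to $0$, and then the \emph{intermediate} segment from $\tilde f^{n_0}(\tilde x_0)$ to $\tilde f^{n_1}(\tilde x_0)$ satisfies $L(\rho)>0$. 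You should adopt this two-case analysis rather than trying to force $\xi_{n}\to+\infty$ along return times, and you must replace the raw Abdenur--Crovisier statement with a rotation-controlled version such as Lemma~\ref{Lem1}.
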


We will prove this theorem by replacing the $C^0$ perturbation result of \cite{MR2054045} by a closing lemma in topology $C^1$, obtained by adapting the arguments of Theorem~6 of \cite{MR2975581}.

\begin{lemme}[Closing lemma with rotation control]\label{Lem1}
Let $\tilde f \in D^1(\T^2)$, $L : \R^2 \to \R$ a non-trivial affine form, and $\mathcal V$ a $C^1$-neighbourhood of $f$. Then, there exists $N\in N$ such that for every non-periodic point $x$ of $f$, there exists a neighbourhood $V$ of $x$ such that if $n\ge N$ and $y\in V$ are such that $f^n(y)\in V$ and $L\big(\rho(\tilde y,n,\tilde f)\big)>0$, then there exists $g\in\mathcal V$ such that $y$ is a periodic point\footnote{Note that in general, this period is different from $n$.} of $g$ satisfying $L\big(\rho(\tilde y,\tilde g)\big)>0$. Moreover, if $f$ preserves $\omega$, then $g$ can be supposed to preserve it too. 
\end{lemme}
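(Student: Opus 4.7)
The plan is to follow the arguments of Theorem~6 of \cite{MR2975581}, the $C^1$-closing lemma with \emph{time control} of Abdenur and Crovisier, and to add to their bookkeeping a precise control on the rotation vector of the resulting periodic orbit. The guiding principle is that a Pugh-type closing perturbation is both local in space and small in $C^1$-norm, so its effect on the rotation vector of the returning orbit segment is quantitatively tight and becomes negligible as the diameter of the closing region shrinks.

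Concretely, starting from the $C^1$-neighbourhood $\mathcal V$, I would first extract an admissible perturbation size $\varepsilon>0$: any perturbation of $f$ supported in a small disc and of $C^1$-norm at most $\varepsilon$ remains in $\mathcal V$. This choice of $\varepsilon$, together with the Abdenur--Crovisier construction, fixes the integer $N$. Then, for each non-periodic $x\in\T^2$, I would choose a neighbourhood $V$ of $x$ of diameter $\delta$ small enough that (a) the orbit segment $y,f(y),\ldots,f^n(y)$ does not revisit $V$ strictly between its extremities, (b) the Pugh-type closing perturbation needed to connect $f^n(y)$ back to $y$ fits inside $\mathcal V$, and (c) the lift of the return displacement $\tilde f^n(\tilde y)-\tilde y$ lies in a uniquely determined integer translate $p+(\tilde V-\tilde y)$ with $p\in\Z^2$. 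The Abdenur--Crovisier construction then produces $g\in\mathcal V$ making $y$ periodic of period $m$, with $m$ controlled relative to $n$ (typically $m\le n$).

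The heart of the matter is the verification that $L(\rho(\tilde y,\tilde g))>0$. Setting $q:=\tilde g^m(\tilde y)-\tilde y\in\Z^2$, the smallness of $\delta$ and of the perturbation forces $q=p$, hence $\rho(\tilde y,\tilde g)=p/m$. Writing $\rho(\tilde y,n,\tilde f)=p/n+e/n$ with $|e|<\delta$, and decomposing $L$ as $L(v)=\ell(v)+c$ into its linear and constant parts, one gets
\[L(\rho(\tilde y,\tilde g))=L(\rho(\tilde y,n,\tilde f))+\ell(p)\Bigl(\tfrac{1}{m}-\tfrac{1}{n}\Bigr)-\tfrac{\ell(e)}{n}.\]
I expect the principal obstacle to be keeping the right-hand side positive when $L(\rho(\tilde y,n,\tilde f))$ is positive but arbitrarily small: the correction $\ell(e)/n$ has \emph{a priori} uncontrolled sign, and mere $C^0$-smallness of the perturbation does not by itself absorb it. This is precisely the difficulty addressed by the extra flexibility built into the Abdenur--Crovisier scheme: choosing the closing so that $m\le n$ makes the first correction non-negative whenever $\ell(p)\ge 0$ (which, up to terms of order $\delta/n$, is guaranteed by $L(\rho(\tilde y,n,\tilde f))>0$), and selecting among the available closing discs lets one optimize the sign of $\ell(e)$ so that the remaining term is either of the right sign or strictly dominated by the first.

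Finally, the conservative case is handled by invoking the volume-preserving (respectively symplectic) version of the Abdenur--Crovisier perturbation scheme, whose construction extends to the measure-preserving setting. The rotation-vector analysis above uses only spatial locality and $C^0$-smallness of the perturbation, so it is unaffected by the additional constraint of preserving $\omega$.
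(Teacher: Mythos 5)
Your proposal takes a genuinely different route from the paper, and that route has a gap that the paper's argument is designed to avoid. The paper's proof is not a direct error estimate: it follows the proof of Theorem~6 of Abdenur--Crovisier line by line, observing that Proposition~4 of \cite{MR2975581} (the key technical step, through Hayashi's connecting lemma) survives when the combinatorial condition ``the length of the periodic pseudo-orbit is not a multiple of $\ell$'' is replaced by ``the periodic pseudo-orbit $(\tilde y_0,\dots,\tilde y_n)$ satisfies $L\big(\frac{\tilde y_n-\tilde y_0}{n}\big)>0$''. The crucial point is the dichotomy at every Hayashi shortcut: each shortcut splits a pseudo-orbit into two pseudo-orbits whose rotation vectors are a \emph{barycentre} of the original's, so if the original lies in $\{L>0\}$, at least one of the two children does as well. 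This is a structural, convexity-based argument; no quantitative control on the perturbation or on the size of $L(\rho)$ is needed.

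Your proposal instead tries to track $\rho(\tilde y,\tilde g)$ by a direct $C^0$-smallness estimate, and it breaks in two places. First, the identity $q=p$ you assert is not justified once the period drops from $n$ to $m<n$: the Abdenur--Crovisier construction \emph{does} discard loops of the orbit, and a discarded loop can carry a non-zero integer displacement, so the integer vector of the closed $g$-orbit has no reason to coincide with that of the original $f$-segment. Second, and as you yourself notice, even granting $q=p$ the term $\ell(e)/n$ has uncontrolled sign relative to $L\big(\rho(\tilde y,n,\tilde f)\big)$, which may be arbitrarily small; your proposed remedy (``optimizing the sign of $\ell(e)$ by selecting among available closing discs'') is not a feature of the Abdenur--Crovisier scheme and does not close the estimate. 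You have correctly located the difficulty, but the resolution is not to absorb the error; it is to never produce it, by running the barycentre dichotomy through every shortcut so that the final closable pseudo-orbit has rotation vector in $\{L>0\}$ \emph{exactly}, with the closed periodic orbit inheriting that rotation vector. The treatment of the conservative case, by invoking the volume/symplectic-preserving version of the same scheme, matches the paper.
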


The idea of the proof of this lemma is identical to that of Theorem 6 of \cite{MR2975581}, by replacing the dichotomy ``$\ell$ divides / does not divide the length of the orbit'' by the dichotomy ``$L\big(\rho(\tilde x,n,\tilde f)\big)>0$ / $L\big(\rho(\tilde x,n,\tilde f)\big)\le 0$''. More precisely, the proof of the connecting lemma of S. Hayashi \cite{MR1432037} builds a ``closable'' pseudo-orbit\footnote{A pseudo-orbit is called \emph{closable} if Pugh's algebraic lemma (Lemma 4 of \cite{MR2975581}, see also \cite{MR0226669}) can be applied simultaneously to every jump of the pseudo-orbit, to make it become a real orbit.} from a recurrent orbit of $f$, by making \emph{shortcuts} in this orbit; each time such a shortcut is performed there are two possibilities of creating a new pseudo-orbit (see Figure~\ref{Fig1}). If the initial orbit belongs to the set $\{L(\rho)>0\}$, then at least one of these two new pseudo-orbits also belongs to the set $\{L(\rho)>0\}$ (as the rotation vector of the initial orbit is a barycentre of the two new ones)\footnote{This corresponds to the initial argument of \cite{MR2975581}: ``If $\ell$ does not divide the length of the initial orbit, then it also does not divides the length of at least one of these two new pseudo-orbits''.}.

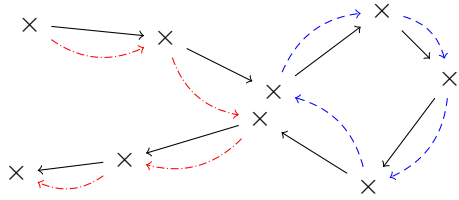
\begin{figure}
\begin{tikzpicture}[scale=1.8]	
\node (A) at (.3,0) {$\times$};
\node (B) at (1.3,-.1) {$\times$};
\node (C) at (2.1,-.5) {$\times$};
\node (D) at (2.9,.1) {$\times$};
\node (E) at (3.4,-.4) {$\times$};
\node (F) at (2.8,-1.2) {$\times$};
\node (G) at (2,-.7) {$\times$};
\node (H) at (1,-1) {$\times$};
\node (I) at (.2,-1.1) {$\times$};

\draw[->] (A) to (B);
\draw[->] (B) to (C);
\draw[->] (C) to (D);
\draw[->] (D) to (E);
\draw[->] (E) to (F);
\draw[->] (F) to (G);
\draw[->] (G) to (H);
\draw[->] (H) to (I);

\draw[->, red, densely dashdotted] (A) to[bend right] (B);
\draw[->, red, densely dashdotted] (B) to[bend right] (G);
\draw[->, red, densely dashdotted] (G) to[bend left] (H);
\draw[->, red, densely dashdotted] (H) to[bend left] (I);

\draw[->, blue, densely dashed] (C) to[bend left]  (D);
\draw[->, blue, densely dashed] (D) to[bend left] (E);
\draw[->, blue, densely dashed] (E) to[bend left] (F);
\draw[->, blue, densely dashed] (F) to[bend right] (C);

\end{tikzpicture}
\caption{If the rotation vector of the initial orbit (in black) is in $\{L>0\}$, then the rotation vector of one of the two pseudo orbits (in red and in blue) too.}\label{Fig1}
\end{figure}

\begin{proof}[Proof of Lemma~\ref{Lem1}]
Simply remark that Proposition~4 of \cite{MR2975581} still holds when condition
\begin{enumerate}
\item[3.] The length of the periodic pseudo-orbit $(y_1,\cdots,y_n = y_0)$ is not a multiple of $\ell$.
\end{enumerate}
is replaced by the condition
\begin{itemize}
\item[3.] The periodic pseudo-orbit\footnote{To be rigorous here, pseudo-orbits must be considered in the cover $\R^2$ and perturbations of diffeormorphisms performed in $\T^2$.} $(y_1,\cdots,y_n = y_0)$ satisfies $L\big(\frac{\tilde y_n - \tilde y_0}{n}\big)>0$.
\end{itemize}
The rest of the proof is identical to Section~3.3.1 of \cite{MR2975581}.
\end{proof}

We now explain how this connecting lemma with rotation control can be applied to adapt the proof of Theorem 1 of \cite{MR2054045} to the $C^1$ case. Let us quickly recall the main arguments of the proof in the $C^0$ case. As the rotation set is convex \cite{MR1053617}, there exists a supporting line of $\rho(\tilde f)$ at $(t,\omega)$, in other words an affine map $L : \R^2\to \R$ such that $L(t,\omega)=0$ and $L(v)\le 0$ for every $v\in\rho(\tilde f)$. Thus, if we build $g$ close to $f$ such that there exists $v\in\rho(\tilde g)$ satisfying $L(v)>0$, then we are done.

The ergodic theorem implies the existence of a point $x_0\in\T^2$ which is recurrent for $f$ and such that $\rho(\tilde x_0,\tilde f) = (t,\omega)$. At this point there are two possibilities. Either there exists $n$ arbitrarily large such that $f^n(x_0)$ is close to $x_0$ and $L(\rho(\tilde x_0, n, \tilde f)\big)>0$; in this case it suffices to apply a $C^0$ closing lemma to $x_0$ and $f^n(x_0)$ to get the theorem. Or for every $n$ large enough such that $f^n(x_0)$ is close to $x$, we have $L(\rho(\tilde x_0, n, \tilde f)\big)\le 0$. This case is a bit more complicated: we begin by proving that in this case, it is possible to suppose that $L(\rho(\tilde x_0, n, \tilde f)\big)< 0$ (Lemma~3 of \cite{MR2054045}). Let $n_0$ be such a number (large enough); a theorem of recurrence of G. Atkinson \cite{MR0419727} implies the existence of a time $n_1\gg n_0$ such that $L(n_1\rho(\tilde x_0, n_1, \tilde f)\big)$ is arbitrarily close to 0. A calculation shows that in this case, $L\big(\rho(\tilde f^{n_0}(\tilde x_0), n_1-n_0, \tilde f)\big)>0$: the rotation vector of the segment of orbit between $\tilde f^{n_0}(\tilde x_0)$ and $\tilde f^{n_1}(\tilde x_0)$ belongs to $\{L>0\}$. It then suffices to apply the $C^0$ closing lemma to $\tilde f^{n_0}(\tilde x_0)$ and $\tilde f^{n_1}(\tilde x_0)$.

\begin{proof}[Proof of Theorem~\ref{Th1}]
Let $\tilde f \in D^1(\R^2)$ be such that $\rho(\tilde f)$ has an extremal point $(t,\omega)\notin\Q^2$, and $\mathcal V$ a $C^1$-neighbourhood of $f$. We fix once for all a lift $\tilde f$ of $f$, and choose $L : \R^2 \to \R$ an affine form such that $L(t,\omega)=0$ and $L(v)\le 0$ for every $v\in\rho(\tilde f)$. Let $x_0\in\T^2$ be a recurrent point of $f$ such that $\rho(\tilde x_0,\tilde f) = (t,\omega)$. Lemma~\ref{Lem1} gives us a number $N\in \N$ and a neighbourhood $V$ of $x_0$. The proof of Theorem~1 of \cite{MR2054045} summarized in the previous discussion gives us a point $y=f^{n_0}(x_0)$ (with $n_0$ possibly equal to 0) and a time $n_1\ge N$ such that $\tilde f^{n_1}(\tilde y)\in V$ and $L\big(\rho(\tilde y,n_1,\tilde f)\big)>0$. Applying Lemma~\ref{Lem1}, we get $g\in\mathcal V$ such that $y$ is a periodic point of $g$ satisfying $L\big(\rho(\tilde y,\tilde g)\big)>0$. This proves the theorem.

Moreover, if $f$ preserves $\omega$, then $g$ can be supposed to preserve it too.
\end{proof}

\begin{rem}
Theorem~1 of \cite{MR2054045} is also true in the $C^0$ measure-preserving case. To see it, it suffices to replace the $C^0$ closing lemma by the measure-preserving one (see for example Lemma 13 of \cite{Oxto-meas}).
\end{rem}

\bibliographystyle{amsalpha}
\bibliography{../../Biblio}

\end{document}